\documentclass[5p,times]{elsarticle}
\usepackage{amsmath,amssymb,amsthm}
\usepackage{algorithm}
\usepackage{algorithmic}
\usepackage{graphicx}
\usepackage{booktabs}
\usepackage[section]{placeins}
\usepackage{multirow}
\usepackage[x11names,table]{xcolor}

\counterwithout{equation}{section}
\usepackage{makecell}
\usepackage{lineno}

\usepackage[colorlinks=true,           
            linkcolor=blue,            
            citecolor=blue,            
            urlcolor=blue]{hyperref}

\theoremstyle{plain}

\theoremstyle{remark}

\newtheorem{proposition}{Proposition}[section]

\DeclareMathOperator*{\minimize}{minimize}

\DeclareMathOperator*{\subt}{subject \ to}
\begin{document}
\begin{frontmatter}

\title{
Learning to control inexact Benders decomposition via reinforcement learning}
\author[inst1]{Zhe Li}

\author[inst1]{Bernard T. Agyeman}

\author[inst2]{Ilias Mitrai\corref{cor1}}
\ead{imitrai@che.utexas.edu}

\author[inst1]{Prodromos Daoutidis\corref{cor1}}
\ead{daout001@umn.edu}

\cortext[cor1]{Corresponding author}

\affiliation[inst1]{%
  organization={Department of Chemical Engineering and Material Science, University of Minnesota},
  city={Minneapolis},
  postcode={MN 55455},
  country={United States}
}

\affiliation[inst2]{%
  organization={McKetta Department of Chemical Engineering, University of Texas at Austin},
  city={Austin},
  postcode={TX 78712},
  country={United States}
}

\begin{abstract}
    Benders decomposition (BD), along with its generalized version (GBD), is a widely used algorithm for solving large-scale mixed-integer optimization problems that arise in the operation of process systems. However, the off-the-shelf application to online settings can be computationally inefficient due to the repeated solution of the master problem. An approach to reduce the solution time is to solve the master problem to local optimality. However, identifying the level of suboptimality at each iteration that minimizes the total solution time is nontrivial. In this paper, we propose the application of reinforcement learning to determine the best optimality gap at each GBD iteration. First, we show that the inexact GBD can converge to the optimal solution given a properly designed optimality gap schedule. Next, leveraging reinforcement learning, we learn a policy that minimizes the total solution time, balancing the solution time per iteration with optimality gap improvement. In the resulting RL-iGBD algorithm, the policy adapts the optimality gap at each iteration based on the features of the problem and the solution progress. In numerical experiments on a mixed-integer economic model predictive control problem, we show that the proposed RL-enhanced iGBD method achieves substantial reductions in solution time.

\end{abstract}

\begin{keyword}
Generalized Benders decomposition\sep
Reinforcement Learning\sep
Mixed‐integer optimization
\end{keyword}

\end{frontmatter}

\section{Introduction}\label{sec:intro}
Benders decomposition (BD) \cite{benders1962partitioning}, along with its generalized version (GBD) \cite{gbd}, is a cutting-plane-based hierarchical decomposition algorithm that has been widely used in process systems engineering (PSE) to solve large-scale optimization problems. Typical applications include production planning and scheduling \cite{elcci2022stochastic}, optimization under uncertainty \cite{laporte1993integer, you2013multicut, luo2024design}, capacity expansion \cite{lohmann2017tailored, lara2018deterministic}, integrated scheduling and control \cite{chu2013integrated, mitrai2022multicut}, mixed integer optimal control \cite{menta2020learning, bansal2003new}, and machine learning \cite{aytug2015feature, santana2025support}. 

BD exploits the underlying structure of an optimization problem, specifically, the presence of a set of variables called complicating variables, which, if fixed, renders the resulting problem easier to solve. In the case of mixed-integer optimization problems, the original problem is decomposed into two smaller problems: a master problem, which considers the integer and possibly some continuous variables, and a subproblem, which is a continuous optimization problem. The master problem and subproblem are solved sequentially and coordinated via Benders cuts, which inform the master problem about the effect of the complicating variables on the subproblems. 

Although BD can lead to a reduction in solution time compared to a monolithic solution approach, its off-the-shelf application can be inefficient, especially for online applications. The computational efficiency of BD depends on the computational effort required to solve the master problem and subproblem at each iteration, and the quality of the Benders cuts, which determine the coordination efficiency. Over the years, several approaches have been proposed to accelerate Benders-type decomposition algorithms by reducing the number of iterations or the solution time per iteration \cite{rahmaniani_bd-review_2017}. Initial efforts exploited the mathematical properties of the monolithic, master, and subproblems. For example, valid inequalities \cite{saharidis2011initialization} can be used to reduce the number of iterations where the subproblem is infeasible. Another approach is to implement the multicut version of the algorithm by decomposing the original problem or a reformulated version into multiple subproblems \cite{birge1988multicut, laporte1993integer,you2013multicut}. Other acceleration techniques focus on improving coordination by generating multiple types of cuts \cite{magnanti1981accelerating, saharidis2010improving, fischetti2010note, sherali2013generating, bodur2017strengthened, glushko2022shaped}. These techniques aim to maximize the amount of information exchanged between the master problem and the subproblem, leading to a reduction in the number of iterations required for convergence.

\begin{figure*}[ht!]
    \centering
    \includegraphics[scale=0.85]{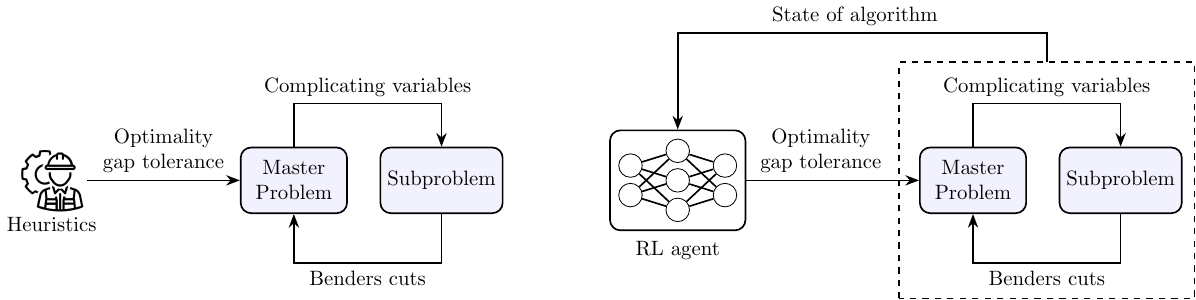}
    \caption{Comparison of traditional iGBD and the RL-enhanced iGBD framework.}
    \label{fig:RL approach}
\end{figure*}

An alternative is to reduce the solution time per iteration by solving the master problem and/or subproblem inexactly, i.e., obtaining a feasible solution that is not necessarily locally or globally optimal. This is a common approach in hierarchical (cutting-plane) \cite{holmberg1994using, rei2009accelerating,contreras2011benders,  li2013inexact, tsang_inexact_2022} and distributed decomposition-based algorithms \cite{boyd2011distributed, xie2017inexact}. In the context of BD, it has been used to accelerate the solution of the master problem and the subproblem, leading to the so-called inexact BD algorithm. This approach aims to exploit the fact that in early iterations the master problem does not have enough Benders cuts and the values of the complicating variables are far from optimal. Thus, heuristics can be used to find locally optimal integer feasible solutions for the master problem or to terminate the solution of the subproblem early \cite{easwaran2009tabu, poojari_genetic_2009, jiang_tabu-bd_2009, lai_gene-bd-capa_2010, Lai201233, geoffrion_multicommodity, rei2009accelerating}. The main challenge with such inexact solution approaches, especially when applied to the master problem, is guaranteeing finite convergence. Traditionally, this was achieved by properly designing an optimality gap schedule for the master problem (which is a mixed integer problem), which reduces the optimality gap monotonically as iterations progress. Although this strategy leads to convergence, it is not necessarily optimal with respect to the total solution time. The task of finding the best parameters for a solution algorithm that minimizes a desired performance metric, such as the solution time, is a black-box optimization problem known as the algorithm configuration task \cite{adriaensen2022autodac}. Existing algorithm configuration approaches rely on machine learning (ML) to learn from data the relation between the configuration of a solution algorithm and its solution time. In the case of BD, ML techniques have been used to identify valuable cuts to add at each iteration \cite{jia_benders_2021, allen2023improvements}, select a set of cuts to add as a warm start \cite{mitrai_gbd_init_2024, mitrai2024machine}, predict integer complicating variables \cite{mana_accelerating_nodate}, and approximate Benders cuts \cite{mitrai_computationally_2024}.

We propose the application of reinforcement learning (RL) to determine the best optimality gap tolerance at each iteration, so that the solution time of the inexact BD algorithm is minimized. The proposed approach is motivated by the sequential nature of the algorithm, since the optimality gap tolerance must be determined at each iteration based on solution progress. RL has been used to solve sequential decision-making problems arising in process control \cite{shin2019reinforcement, nian2020review, yoo2021reinforcement}, scheduling \cite{hubbs2020deep}, supply chain management \cite{shin2016multi, wang2025risk, burtea2024constrained, kotecha2025leveraging}, capacity expansion \cite{shin2019multi}, and process design \cite{gao2024deep, khan2020searching}. RL has also been used to accelerate optimization algorithms \cite{mitrai_review-opt-ml_2024, bengio2021machine} for mixed integer optimization \cite{tang_reinforcement_2020, wang_cut-hem_2023, chi_rl-cg_2023, yuan_rl-cg_2024}. 

In this paper, we first develop the inexact solution approach for GBD, which is referred to as inexact GBD (iGBD) in the sequel. We show that the algorithm converges under a properly designed optimality gap tolerance schedule. We then use RL to find the tolerance schedule that minimizes the total solution time by using a reward function that accounts for both the solution time per iteration and the improvement in the optimality gap. We use a mixed-integer economic model predictive control (MPC) case study to validate the proposed approach. Specifically, we consider the case where a reactor is used to manufacture multiple products and its operation is affected by disturbances, such as changes in product demands and inlet conditions. For a given disturbance, a mixed integer economic MPC problem is solved to determine the production sequence and dynamic transition profiles between products. The problem is solved using an inexact GBD algorithm, where the master problem determines the production sequence and transition times, and the subproblem considers the dynamic transitions. 
The results show that, without affecting the solution quality, the proposed RL-iGBD approach can lead to a reduction in solution time up to $27 \%$ on average compared to standard GBD, and $18 \%$ reduction in solution time compared to heuristic approaches for choosing the optimality gap tolerance. 

The remainder of the paper is organized as follows. In Section \ref{sec:igbd}, we introduce the classic GBD algorithm and the proposed iGBD algorithm. In Section \ref{sec:igbd-rl}, we introduce the generic framework of RL and discuss how the selection of the optimality gap tolerance in iGBD is addressed through RL. In Section \ref{sec:mimpc-model} we present the mixed integer MPC problem and the setup of the computational experiment. Finally, in Section \ref{sec:results} we present and analyze the results of the computational study.

\section{Inexact Generalized Benders Decomposition}\label{sec:igbd}

\subsection{Generalized Benders Decomposition}
We first introduce the classic GBD algorithm. Let's consider the following optimization problem.
\begin{equation}\label{eq:origin}
    \begin{aligned}
        \mathcal{P}(p):=\minimize \quad & f_1(y,p)+f_2(x,y,p)\\
        \subt \quad & g(y,p) \le 0 \\
        & h(x,y, p) \le 0 \\
        & y \in \mathbb{Z}^{n^y_d} \times \mathbb{R}^{n^y_c},x \in \mathbb{R}^{n^x_c}
    \end{aligned}
\end{equation}
where $p$ are the parameters of the problem, $n=n^y_d + n^y_c+n^x_c$ is the number of variables,and $f_1: \mathbb{Z}^{n^y_d} \times \mathbb{R}^{n_c^y} \rightarrow \mathbb{R}$, $f_2: \mathbb{Z}^{n^y_d} \times \mathbb{R}^{n_c^y} \times \mathbb{R}^{n^x_c} \rightarrow \mathbb{R}$, $g: \mathbb{Z}^{n^y_d} \times \mathbb{R}^{n_c^y} \rightarrow \mathbb{R}^{n^g}$, $h: \mathbb{Z}^{n^y_d} \times \mathbb{R}^{n_c^y} \times \mathbb{R}^{n^x_c} \rightarrow \mathbb{R}^{n^h}$. For this problem, if the variables $y$ are fixed, the resulting problem is a continuous optimization problem that usually can be solved faster than the original mixed integer problem. This structure can be utilized in a GBD approach, where the $y$ variables are the complicating variables and assigned to the master problem and the $x$ variables are assigned to the subproblem. Under this decomposition, the subproblem is parameterized by $y$ and $p$ and is equal to
\begin{equation}\label{eq:sp}
    \begin{aligned}
        S(y,p) := \minimize\quad & f_2(x,\bar y,p)\\
        \subt \quad & h(x,\bar{y}, p) \leq 0 \\
        & \bar{y}=y \quad : \lambda \\
        & \bar y \in \mathbb{R}^{n^y_d} \times \mathbb{R}^{n^y_c}, x \in \mathbb{R}^{n^x_c}
    \end{aligned}
\end{equation}
where $\lambda$ is the Lagrange multiplier vector for the equality constraint $\bar{y}=y$. We assume that the subproblem is always feasible for all possible values of the complicating variables $y$. The original problem can be reformulated as follows: 
\begin{equation}\label{eq:re-origin}
    \begin{aligned}
        \mathcal{P}(p):= \minimize \quad & f_1(y,p)+{S}(y,p)\\
        \subt \quad & g(y,{p}) \leq 0 \\
        & y \in \mathbb{Z}^{n^y_d} \times \mathbb{R}^{n^y_c}
    \end{aligned}
\end{equation}
Although this is an exact reformulation, its solution is challenging since the value function of the subproblem $S(y,p)$ is not known explicitly. The GBD algorithm iteratively constructs a convex piece-wise affine approximation of the value function using Benders cuts. For a given value of the complicating variables $\bar{y}=y$, the Benders optimality cut is
\begin{equation}\label{eq:benders-cut}
    \begin{aligned}
        {S}(y,p) \geq {S}(\bar{y},p) - \lambda^\top (y-\bar{y}). 
    \end{aligned}
\end{equation}
This approximation is exact when the value function $S(y,p)$ is convex. Using this approximation, the master problem can be reformulated as follows. 
\begin{equation} \label{eq: master all cuts}
    \begin{aligned}
        {M}_l(p) := \minimize \quad & f_1(y,p)+\eta\\
        \subt \quad & g(y,p) \leq 0 \\
        & \eta \geq {S}(y^{(l)},p) - (\lambda^{(l)})^\top(y-y^{(l)}) \quad \forall l\in \mathcal{L} \\
        & y \in \mathbb{Z}^{n^y_d} \times \mathbb{R}^{n^y_c}, \eta \in [\underline{\eta}, \overline{\eta}]
    \end{aligned}
\end{equation}
where $\eta$ is an auxiliary variable that approximates the value function ${S}(y,p)$ and the set $\mathcal{L}$ represents all the points used to approximate the value function. 
The solution of the reformulated master problem in Eq.~\ref{eq: master all cuts} is challenging, since the number of cuts that approximate the value function of the subproblem can be very large. For a given optimization problem $\mathcal{P}(p)$, GBD starts with a master problem without cuts and adds them iteratively as dictated by the solution of the master problem. Specifically, in iteration $l$, the master problem $M_l(p)$ is optimized to obtain $(y^{(l)}, \eta^{(l)})$. The complicating variables $y^{(l)}$ are fixed in the subproblem, which is then solved to obtain $x^{(l)}$, Lagrange multiplier $\lambda^{(l)}$, and ${S}({y}^{(l)},p)$. The Benders cut is constructed and added to the master problem, which is solved again. The objective value of the master problem $v^{(l)}=f_1(y^{(l)},p)+\eta^{(l)}$ provides a valid lower bound to the optimal objective value $v^*$ since the problem $M_{l}(p)$ is a relaxation of $\mathcal{P}(p)$ in Eq.~\ref{eq:re-origin}. The value $f_1(y^{(l)},p)+S(y^{(l)},p)$ corresponds to a feasible solution $(x^{(l)},y^{(l)})$ of problem $\mathcal{P}(p)$, therefore providing an upper bound. The algorithm terminates when the gap between the lower and upper bounds is less than a predetermined threshold $\varepsilon_{tol}$. Under the convexity of the value function $S(y,p)$, this procedure converges to the global optimal solution $(x^*,y^*)$ of the monolithic problem \cite{gbd}.

\subsection{Inexact Generalized Benders Decomposition}\label{subsec:igbd-alg}
In this section, we present the inexact GBD algorithm.
We define $\epsilon_{tol-MP}^{(l)}$ as the optimality gap tolerance for the master problem at iteration $l$. In this setting, the solution of the master problem stops once a feasible integer solution with an optimality gap $\epsilon_{MP}^{(l)}$ less than or equal to $\epsilon_{tol-MP}^{(l)}$, that is, $\epsilon_{MP}^{(l)}\leq \epsilon_{tol-MP}^{(l)}$, is found. We assume that the master problem is solved using standard branch-and-bound techniques. The feasible solution to the master problem $y^{(l)}$ is used to generate Benders cuts and obtain an upper bound $f_{1}(y^{(l)},p)+ S(y^{(l)},p)$. The best upper bound up to iteration $l$ is $UB^{(l)} = \min(UB^{(l-1)}, f_{1}(y^{(l)},p)+ S(y^{(l)},p))$. 

The feasible solution to the master problem is not guaranteed to provide a valid lower bound since the value of the associated objective function $v^{(l)}$ could be higher than the optimal value of the monolithic problem $v^*$. To overcome this limitation, we define $v^{(l)}(1-\epsilon_{MP}^{(l)})$ as the \textit{true lower bound}, and prove its validity in Proposition \ref{thm:valid_lower_bound}. 
We further force the monotonic increase of the true lower bound by updating $TLB^{(l)} = \max(TLB^{(l-1)}, v^{(l)}(1-\epsilon_{MP}^{(l)}))$. For a given feasible solution of the master problem and the corresponding solution of the subproblem, the incumbent optimality gap of iGBD is equal to $\varepsilon^{(l)}=(UB^{(l)}-TLB^{(l)})/UB^{(l)}$, referred to as \textit{Benders gap} in this paper. Consequently, we refer to $\epsilon_{MP}^{(l)}$ as the \textit{master gap}. The algorithm terminates once $\varepsilon^{(l)}< \varepsilon_{tol}$. Algorithm~\ref{agl:igbd} below summarizes the full iGBD procedure.

\begin{algorithm}[H]
\caption{iGBD algorithm.}
\begin{algorithmic}[1]
\label{agl:igbd}
  \STATE \textbf{Initialize:} $ILB^{(0)}=-\infty$, $TLB^{(0)}=-\infty$, $UB^{(0)}=\infty$, $l=0$, $\varepsilon^{(0)}=1$, $\varepsilon_{tol}$, $ \underline{\epsilon}_{tol-MP}$, $ \overline{\epsilon}_{tol-MP}$.
  \WHILE{$\varepsilon^{(l)}\ge\varepsilon_{tol}$}
    \STATE Set $\epsilon_{tol-MP}^{(l)} \in [\underline{\epsilon}_{tol-MP},\overline{\epsilon}_{tol-MP}] $.
    \STATE Update $l = l+1$.
    \STATE Solve the master problem and obtain $y^{(l)},\eta^{(l)},v^{(l)}, \epsilon_{MP}^{(l)}$.
    \STATE Update  $TLB^{(l)}=\max(TLB^{(l-1)}, v^{(l)}(1-\epsilon_{MP}^{(l)}))$.
    \STATE Solve subproblem for $\lambda^{(l)}$ and $S(y^{(l)},p)$. 
    \STATE Add Benders cut $\eta\ge S(y^{(l)},p) - (\lambda^{(l)})^\top(y-y^{(l)})$ to the master problem. 
    \STATE Update $UB^{(l)} = \min(UB^{(l-1)}, f_{1}(y^{(l)},p)+ S(y^{(l)},p))$.
    \STATE Update $\varepsilon^{(l)}=(UB^{(l)}-TLB^{(l)})/UB^{(l)}$.
  \ENDWHILE
\end{algorithmic}
\end{algorithm}


\subsection{Theoretical Properties of iGBD}\label{subsec:igbd-theory}
We shall first show in Proposition \ref{thm:valid_lower_bound} that $v^{(l)}(1-\epsilon_{MP}^{(l)})$ is a valid lower bound of the global optimal objective value $v^*$.
\begin{proposition}
    \label{thm:valid_lower_bound}
    At any iteration $l$, the value $v^{(l)}(1-\epsilon_{MP}^{(l)})$ is a valid lower bound on the optimal objective value $v^*$ for the problem, i.e. $v^{(l)}(1-\epsilon_{MP}^{(l)})\le v^*$. 
\end{proposition}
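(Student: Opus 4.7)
The plan is to chain two inequalities: first, the master problem with the cuts accumulated so far is a relaxation of the monolithic problem $\mathcal{P}(p)$, so its true optimum is a lower bound on $v^*$; second, by the definition of the MIP optimality gap returned by a branch-and-bound solver, the quantity $v^{(l)}(1-\epsilon_{MP}^{(l)})$ is itself a lower bound on the true optimum of the master problem.

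For the first step, I would note that at iteration $l$ the master problem $M_l(p)$ is obtained from the reformulation in Eq.~\ref{eq:re-origin} by replacing the value function $S(y,p)$ with the auxiliary variable $\eta$ subject only to a finite collection of Benders optimality cuts. Under the standing convexity assumption that makes Eq.~\ref{eq:benders-cut} a valid under-estimator of $S(y,p)$, any primal feasible solution $(x,y)$ of $\mathcal{P}(p)$ can be lifted to a feasible solution $(y,\eta)=(y,S(y,p))$ of $M_l(p)$ with the same objective value. Hence $M_l(p)$ is a relaxation, and if $v^*_{M_l}$ denotes its true (global) optimum, then $v^*_{M_l} \le v^*$.

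For the second step, I would invoke the standard semantics of the relative MIP gap. When a branch-and-bound procedure terminates with incumbent objective $v^{(l)}$ at relative gap $\epsilon_{MP}^{(l)}$, the best bound $B^{(l)}$ of the B\&B tree satisfies $(v^{(l)} - B^{(l)})/v^{(l)} \le \epsilon_{MP}^{(l)}$, which rearranges to $B^{(l)} \ge v^{(l)}(1-\epsilon_{MP}^{(l)})$. Since $B^{(l)}$ is by construction a valid lower bound on $v^*_{M_l}$, we get $v^{(l)}(1-\epsilon_{MP}^{(l)}) \le B^{(l)} \le v^*_{M_l}$. Combining with $v^*_{M_l} \le v^*$ from the first step yields the claim.

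The step most likely to need care is the first one, in particular pinning down the sign conventions so that the Benders cuts are genuine under-estimators (rather than secants or over-estimators), and being explicit that $v^{(l)}$ refers to the incumbent objective of the master problem rather than to any lower bound quantity — a potential source of confusion since elsewhere in the algorithm $v^{(l)}$ plays the role of a bound for the outer iteration. The MIP-gap manipulation in the second step is purely algebraic, and the assumption $v^{(l)} > 0$ implicit in the relative gap definition should either be stated or replaced by the absolute-gap variant in a remark.
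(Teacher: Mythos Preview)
Your proposal is correct and follows essentially the same two-step argument as the paper: the master problem at iteration $l$ is a relaxation of $\mathcal{P}(p)$, so its true optimum $(v^{(l)})^*$ lower-bounds $v^*$, and the MIP gap definition gives $v^{(l)}(1-\epsilon_{MP}^{(l)}) \le (v^{(l)})^*$. Your version is slightly more explicit (introducing the branch-and-bound best bound $B^{(l)}$ and spelling out why the cuts yield a relaxation), but the structure and key ideas coincide with the paper's proof.
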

\begin{proof}
    Let $(v^{(l)})^*$ be the globally optimal value for the master problem in iteration $l$. It is known that $v^{(l)}(1-\epsilon_{MP}^{(l)}) \le (v^{(l)})^* \le v^{(l)}$. Since the master problem is a relaxed version of the original problem, $(v^{(l)})^*$ is always a valid lower bound of $v^*$, that is, $(v^{(l)})^* \le v^*$. Therefore, we obtain:
    \begin{equation*}
    v^{(l)}(1-\epsilon_{MP}^{(l)}) \le (v^{(l)})^* \le v^*
    \end{equation*}
\end{proof}

Proposition \ref{thm:valid_lower_bound} establishes that $v^{(l)}(1-\epsilon_{MP}^{(l)})$ is a valid lower bound, and the termination criterion is defined on this basis. We now turn to the convergence property of the proposed iGBD algorithm. The idea of an inexact solution of the master problem within the Benders decomposition framework was first proposed by \cite{geoffrion_multicommodity}, where convergence was argued by noting the finiteness of dual solutions in the linear subproblem. In contrast, the convergence property of the classic GBD algorithm is established rigorously (see Theorem 2.5 in \cite{gbd}). Here we extend the convergence argument of GBD to the inexact setting. Specifically, under the assumption that the optimality gap tolerance of the master problem asymptotically converges to zero, we show that the proposed iGBD algorithm achieves a finite convergence property. 

\begin{proposition}
    \label{thm:convergence}
    Assume that the complicating variable $y$ is in a compact set $Y$. With a tolerance sequence $\{\epsilon_{tol-MP}^{(l)}\}_{{l}\in\mathbb{N}}$ asymptotically converging to 0, the proposed iGBD algorithm terminates in a finite number of steps for any given $\varepsilon_{tol}>0$.
\end{proposition}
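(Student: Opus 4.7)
The strategy is to argue by contradiction: suppose that for some $\varepsilon_{tol}>0$ the algorithm generates an infinite sequence of iterations, and show that the Benders gap $\varepsilon^{(l)}$ must then be driven below $\varepsilon_{tol}$, producing a contradiction with the non-termination assumption. The compactness of $Y$ together with the shrinking tolerance $\epsilon_{tol\text{-}MP}^{(l)}\to 0$ is exactly the right combination to close the gap between the upper bound $UB^{(l)}$ and the valid lower bound $TLB^{(l)}$ established in Proposition~\ref{thm:valid_lower_bound}.

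First, I would extract a convergent subsequence. Since $\{y^{(l)}\}_{l\in\mathbb{N}}$ lies in the compact set $Y$, and since the integer coordinates can take only finitely many values inside $Y$, Bolzano--Weierstrass gives a subsequence $\{y^{(l_k)}\}$ with a common integer part that converges to some $\bar{y}\in Y$. Next, I would exploit the Benders cuts that have already been accumulated: for any $m>k$, the cut generated at iteration $l_k$ is present in the master problem at iteration $l_m$, so feasibility of $(y^{(l_m)},\eta^{(l_m)})$ for that cut yields
\begin{equation*}
\eta^{(l_m)} \;\ge\; S(y^{(l_k)},p) \;-\; (\lambda^{(l_k)})^{\top}\!\bigl(y^{(l_m)}-y^{(l_k)}\bigr).
\end{equation*}
Under continuity of $S(\cdot,p)$ and boundedness of the multiplier sequence $\{\lambda^{(l_k)}\}$ (which is the standard regularity assumption carried over from classical GBD), the right-hand side converges to $S(\bar{y},p)$ as $k,m\to\infty$. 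Hence $\eta^{(l_m)}\to S(\bar{y},p)$ and consequently $v^{(l_m)}=f_1(y^{(l_m)},p)+\eta^{(l_m)}\to f_1(\bar{y},p)+S(\bar{y},p)$.

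Then I would close the bracket on $UB^{(l_m)}$ and $TLB^{(l_m)}$. The upper bound update gives $UB^{(l_m)}\le f_1(y^{(l_m)},p)+S(y^{(l_m)},p)\to f_1(\bar{y},p)+S(\bar{y},p)$ by continuity. For the lower bound, the hypothesis $\epsilon_{tol\text{-}MP}^{(l)}\to 0$ forces $\epsilon_{MP}^{(l_m)}\to 0$, so $v^{(l_m)}(1-\epsilon_{MP}^{(l_m)})$ converges to the same limit $f_1(\bar{y},p)+S(\bar{y},p)$, and the monotone update $TLB^{(l_m)}=\max(TLB^{(l_m-1)},v^{(l_m)}(1-\epsilon_{MP}^{(l_m)}))$ inherits that convergence. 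Therefore $UB^{(l_m)}-TLB^{(l_m)}\to 0$ and $\varepsilon^{(l_m)}\to 0$, so eventually $\varepsilon^{(l_m)}<\varepsilon_{tol}$, contradicting the assumption that the algorithm never terminates.

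\textbf{Main obstacle.} The delicate step is justifying the passage to the limit in the Benders cut inequality, because it needs both continuity of the value function $S(\cdot,p)$ in $y$ and boundedness of the multiplier sequence $\{\lambda^{(l_k)}\}$; without the latter, the linear correction term $(\lambda^{(l_k)})^{\top}(y^{(l_m)}-y^{(l_k)})$ need not vanish even as $y^{(l_m)}-y^{(l_k)}\to 0$. In the classical GBD setting this is handled through Geoffrion's property~P and the finiteness of the dual extreme points; in the inexact setting one either invokes the same structural hypothesis on the subproblem or assumes a regularity condition (e.g., Slater-type) that guarantees a compact dual set. Aside from this, the argument is essentially a clean compactness-plus-continuity extension of the classical GBD convergence proof, where the only new ingredient is using the decay of $\epsilon_{tol\text{-}MP}^{(l)}$ to push $v^{(l)}(1-\epsilon_{MP}^{(l)})$ up to the true master optimum in the limit.
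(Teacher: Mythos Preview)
Your proposal is correct and follows essentially the same contradiction-via-compactness route as the paper: extract a convergent subsequence of $\{y^{(l)}\}$, use the accumulated Benders cuts to force $v^{(l)}$ above the limiting upper bound, and then let $\epsilon_{tol\text{-}MP}^{(l)}\to 0$ push $TLB^{(l)}$ up to close the Benders gap. If anything, your version is slightly more careful than the paper's in two respects---you work with a cut that is actually present in the master (indexing $l_k<l_m$ explicitly) rather than a limiting cut, and you flag the multiplier-boundedness hypothesis that the paper invokes only implicitly.
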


\begin{proof}
    Suppose that the algorithm does not terminate in a finite number of steps. Let $\{y^{(l)}, \eta^{(l)}\}$ be the sequence of master problem solutions. Following the assumption that the complicating variable $y$ is in a compact set, we may assume that a subsequence of $\{y^{(l)}\}$ converges to the point $\hat{y}$. Furthermore, we assume that a subsequence of the optimal Lagrange multiplier $\{\lambda^{(l)}\}$ converges to $\hat{\lambda}$ because the subproblem takes the stationary input $\hat{y}$. Consequently, the Benders cut inequality generated by the subproblem will converge to the form $\eta \ge S(\hat{y}) - \hat{\lambda}^\top(y-\hat{y})$, which must hold at point $(\hat{y}, \eta^{(l)})$ in the master problem, therefore:
    \begin{align*}
        & \eta^{(l)} \ge S(\hat{y}) - \bar\lambda^\top(\hat{y}-\hat{y}) = S(\hat{y}) \\
        \Leftrightarrow \quad & f_1(\hat{y})+\eta^{(l)} \ge f_1(\hat{y}) + S(\hat{y}) \\
        \Leftrightarrow \quad & v^{(l)} \ge UB^{(l)}
    \end{align*}
    leading to the following inequality: 
    \begin{align*}
        \varepsilon^{(l)} & = \frac{UB^{(l)} - TLB^{(l)}}{UB^{(l)}} \le 1-\frac{v^{(l)}(1-\epsilon_{MP}^{(l)})}{UB^{(l)}}\\
        & \le 1-\frac{v^{(l)}(1-\epsilon_{MP}^{(l)})}{v^{(l)}} = \epsilon_{MP}^{(l)} \le \epsilon_{tol-MP}^{(l)}
    \end{align*}
    
    Since $\epsilon_{tol-MP}^{(l)}$ asymptotically converges to 0, there exists a finite number $m$ such that $\varepsilon^{(m)}<\epsilon_{tol-MP}^{(m)}<\varepsilon_{tol}$, thus the algorithm will terminate, which completes the proof.
\end{proof}

\section{Reinforcement Learning-Enhanced Inexact Generalized Benders Decomposition}\label{sec:igbd-rl}

Although Algorithm \ref{agl:igbd} provides a general approach to address a computationally expensive master problem, selecting the master gap tolerance sequence $\epsilon_{tol-MP}^{(l)}$, which directly controls the convergence speed, constitutes a challenging black-box optimization problem. Here, we cast the iterative selection of the optimality gap tolerance as a sequential decision-making task. Specifically, we use RL to train a policy that adaptively selects $\epsilon_{tol-MP}^{(l)}$ based on both instance-specific features and the state of the solution.

\subsection{Reinforcement Learning}
Reinforcement learning (RL) is a computational framework for solving sequential decision-making problems through trial and error, guided by feedback in the form of a scalar reward \cite{sutton1998reinforcement}. In RL, the decision maker, known as the agent, interacts with an environment by observing its current state $s$, selecting an action $a$ according to a policy $\pi$, and receiving a reward $r$ that reflects the desirability of the chosen action.


 RL can be formalized through the Markov Decision Process (MDP) defined by the tuple $\mathcal{M} = (\mathcal{S}, \mathcal{A}, \mathcal{P}, \mathcal{R}, \gamma)$. The state space $\mathcal{S}$ represents all possible configurations of the environment, while the action space $\mathcal{A}$ defines the set of all possible actions. The transition model $\mathcal{P}: \mathcal{S} \times \mathcal{A} \times \mathcal{S} \rightarrow [0,1]$ specifies the probability $\mathcal{P}(s'|s,a)$ of transitioning to state $s'$ from state $s$ after taking action $a$. The reward function $\mathcal{R}: \mathcal{S} \times \mathcal{A} \rightarrow \mathbb{R}$ assigns a scalar value reflecting the immediate utility of a state–action pair. Finally, the discount factor $\gamma \in [0,1)$ weights future rewards relative to immediate ones.

The RL agent selects actions according to a policy $\pi \in \Pi: \mathcal{S} \times \mathcal{A} \rightarrow [0,1]$ which defines a conditional probability distribution on actions given states, that is, $a \sim \pi(a|s)$. The chosen action and the current state jointly determine the immediate reward $r$ and the next state $s' \sim \mathcal{P}(s' | s,a)$. The agent's objective is to maximize the expected cumulative discounted reward over time, which is denoted as $J(\pi)=\mathbb{E}_\pi \left[
    \sum^\infty_{t=0} \gamma^tr(s_t, a_t)
    \right]$. 

The solution approaches in RL are broadly categorized into value- and policy-based approaches. Value-based methods
estimate the expected return of each action given a state and select the action with the highest estimated value \cite{watkins1992q, mnih2015dqn, sutton1995sarsa}. Although effective in discrete action settings, these methods are less suited to problems involving large or continuous action spaces. Policy-based methods address this limitation by directly optimizing a parameterized policy using gradient ascent \cite{williams1992REINFORCE, schulman2015trpo, ppo}.
In practice, many RL algorithms combine these two paradigms in the form of actor–critic methods~\cite{konda1999actor_critic}, which jointly learn a value function (critic) and a policy (actor). This structure improves sample efficiency and stabilizes learning, making it particularly well suited for high-dimensional or complex tasks.

\subsection{Reinforcement Learning-Enhanced Gap Tolerance Selection}\label{subsec:igbd-rl}

In this section, we show how the gap tolerance selection task of the iGBD algorithm can be modeled as an MDP, which is then solved using RL algorithms. To make the notation consistent, the index of iteration $l$ is the same as the index of time steps in the environment. The environment considers the solution of a problem instance $\mathcal{P}(p)$ parameterized by $p$. Each episode terminates when either the algorithm converges or reaches the maximum iteration limit $T_{\max}$.

The state at iteration $l$ is defined as: 
\begin{equation}
    \begin{aligned}
        s_{l}= \left\{p, l, {\epsilon}_{tol-MP}^{(l)},\epsilon_{MP}^{(l)}, \varepsilon^{(l)}, \log(\varepsilon^{(l)}), \log\bigg(\frac{\varepsilon^{(l-1)}}{\varepsilon^{(l)}}\bigg)\right\}.
    \end{aligned}
\end{equation}

The state captures information on the parameters of the optimization problem $p$ and the
solution status of the iGBD algorithm. The iteration index $l$ is a direct signal of the progress of the algorithm and also reflects the growing size of the master problem as the Benders cuts cumulate. $\epsilon_{tol-MP}^{(l)}$ and $\epsilon_{MP}^{(l)}$ are the gap tolerance and the realized gap of the master problem, respectively. $\varepsilon^{(l)}$ is the incumbent Benders gap in the real-number scale, while $\log(\varepsilon^{(l)})$ presents the same gap in the logarithmic scale. Although these two variables are mathematically equivalent, the logarithmic scale provides a more visible change in later iterations when $\varepsilon^{(l)}$ is close to zero, and thus helps the agent identify the evolution of the system.
The last term $\log(\frac{\varepsilon^{(l-1)}}{\varepsilon^{(l)}})$ measures the relative improvement of the Benders gap in the logarithmic scale. Together, these features capture both the static problem context and the dynamic progress of the iGBD algorithm, providing the agent with the detailed information to effectively adjust the gap tolerance. 

The only action in this task is to determine the master gap tolerance $\epsilon_{tol-MP}^{(l)}$ at each iteration. 
To preserve the theoretical convergence property of iGBD, we design a parameterized gap tolerance selection region that adaptively shrinks as the Benders gap decreases, achieving the asymptotic convergence of the tolerance sequence.
Specifically, the policy outputs a normalized score $a_l \in [-1,1]$, which is mapped into the real action $\varepsilon_{MP}^{(l)}$ by: 
\begin{equation}\label{eq:action-map}
    \begin{aligned}
        \epsilon_{tol-MP}^{(l)} = \min\left(\underline{\epsilon}_{tol-MP}+\cfrac{a_l+1}{2}(\varepsilon^{(l-1)}-\underline{\epsilon}_{tol-MP}), \ \overline{\epsilon}_{tol-MP}\right)
    \end{aligned}
\end{equation}
so that the master gap tolerance is chosen below the incumbent Benders gap by design. A high score $a_l$ close to 1 yields a loose master gap tolerance, which saves runtime at a specific iteration, but potentially loses the ability to reduce the Benders gap. In contrast, a score $a_l$ close to -1 is more aggressive in reducing the gap at the cost of runtime. Figure~\ref{fig:action_space} provides an intuitive illustration of the adaptive gap selection region. Although $a_l$ is a continuous variable, we uniformly discretize the continuous space $[-1,1]$ into $K$ points to simplify learning. 

\begin{figure}[h!]
    \centering
    \includegraphics[
    width=\columnwidth,
    ]{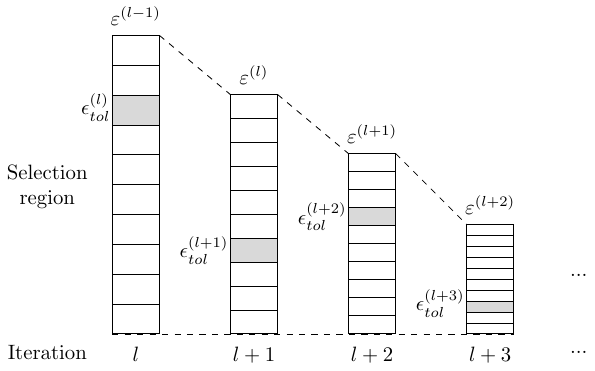}
    \caption{Intuitive illustration of the adaptive gap tolerance selection region.}
    \label{fig:action_space}
\end{figure}

After $\varepsilon_{MP}^{(l)}$ is determined by the policy, the master problem is solved for the values of the complicating variables, which are fixed in the subproblem to generate the Benders cut and obtain updated bounds. The state is then updated on the basis of the information from the solution, and the agent receives the reward signal that measures its performance. The reward design is crucial for learning efficiency. The goal of the RL agent is to minimize the overall runtime required for convergence. Let $t_{MP}^{(l)}$ be the runtime of the master problem in iteration $l$, then the reward $r_l$ is presented as:
\begin{equation}
    \begin{aligned}
        & r_1 = - t_{MP}^{(l)} / t_{ref} \\
        & r_2 = \alpha \ \log\left(\frac{\varepsilon^{(l-1)}}{\max(\varepsilon^{(l)},\underline{\epsilon}_{tol-MP})}\right) \\ 
        & r_l = r_1 + r_2 -1
    \end{aligned}
\end{equation}
where $\alpha$ is an adjustable weight coefficient and $t_{ref}$ is a predetermined reference master problem runtime. The first term $r_1$ directly penalizes the relative solution time for the master problem at each iteration, while the second term $r_2$ incentivizes the reduction of the Benders gap. The total reward is further subtracted by 1 to penalize the iteration number. 

In our experience, both reward components are necessary for effective policy learning. If only $r_1$ is included in the reward, the agent becomes myopically greedy in reducing the runtime per iteration by setting the optimality gap tolerance to large values. However, this can lead to an increase in the total solution time. Many recent works on the use of RL to enhance existing optimization algorithms exclusively consider $r_2$, focusing on reductions in optimality gaps \cite{tang_reinforcement_2020, chi_rl-cg_2023, yuan_rl-cg_2024}. Although this successfully reduces the overall runtime for their problems, it is not the case for the iGBD algorithm. Through computational experiments, we will demonstrate that gap improvement and solution time are sometimes conflicting metrics for the gap tolerance selection task in iGBD, and the agent must learn to balance these competing objectives via direct interactions with the environment. 
The policy is trained on randomly generated problem instances. After training, the policy $\pi_\theta$ is embedded in the iGBD algorithm to improve its performance. 
At the beginning of iteration $l$, the trained policy outputs the master gap tolerance ${\epsilon}_{tol-MP}^{(l)}$ given the state $s_l$. The RL-enhanced iGBD algorithm is presented in Algorithm~\ref{alg: RL-iGBD}.

We use the Proximal Policy Optimization (PPO) algorithm ~\cite{ppo} to train the RL agent due to its favorable trade-offs in stability, sample efficiency, and scalability. PPO is a policy gradient method that simultaneously estimates the policy (actor) and the value function (critic). In our setting, the critic estimates expected returns, that is, the total solution time, using an advantage function. The advantage is a measure of how much better a particular action (optimality gap tolerance) is compared to the average action in a given state. The actor then uses this advantage to guide policy updates. PPO collects trajectory data by interacting with the environment (iGBD) and updates both the policy and value networks using this experience. The policy is optimized by minimizing a surrogate loss function that reflects expected performance improvements while being regularized to ensure stable updates. A key step in PPO is the clipped objective function, which constrains policy updates and prevents large shifts from the current policy. Given its robustness and empirically demonstrated ability to produce stable learning in diverse environments, PPO is particularly well suited for the GBD setting considered in this work.

\begin{algorithm}[t]
\caption{RL-enhanced iGBD.}\label{alg: RL-iGBD}
\begin{algorithmic}[1]
  \STATE \textbf{Initialize:} $l=0$, $ILB^{(0)}=-\infty$, $TLB^{(0)}=-\infty$, $UB^{(0)}=\infty$, $\varepsilon_{tol}$, $\varepsilon^{(0)}=1$, $s_0=\{p, 0, \overline{\epsilon}_{tol-MP},\overline{\epsilon}_{tol-MP}, \varepsilon^{(0)}, \log(\varepsilon^{(0)}), 0\}$
  \WHILE{$\varepsilon^{(l)}\ge\varepsilon_{tol}$}
    \STATE Set $\epsilon_{tol-MP}^{(l)}=\pi_\theta(s_{l})$.
    \STATE Update $l = l+1$.
    \STATE Solve the master problem and obtain $y^{(l)},\eta^{(l)}, v^{(l)},\epsilon_{MP}^{(l)}$.
    \STATE Update $TLB^{(l)}=\max(TLB^{(l-1)}, v^{(l)}(1-\epsilon_{MP}^{(l)}))$.
    \STATE Solve subproblem for $\lambda^{(l)}$ and $S(y^{(l)},p)$. 
    \STATE Add Benders cut $\eta\ge S(y^{(l)},p) - (\lambda^{(l)})^\top(y-y^{(l)})$ to the master problem. 
    \STATE Update $UB^{(l)} = \min(UB^{(l-1)}, f_{1}(y^{(l)},p)+ S(y^{(l)},p))$.
    \STATE Update  $\varepsilon^{(l)}=(UB^{(l)}-TLB^{(l)})/UB^{(l)}$.
    \STATE Update  $s_{l}=\left\{p, l, {\epsilon}_{tol-MP}^{(l)},\epsilon_{MP}^{(l)}, \varepsilon^{(l)}, \log(\varepsilon^{(l)}), \log\left( \frac{\varepsilon^{(l-1)}}{\varepsilon^{(l)}}\right) \right\}$.
  \ENDWHILE
\end{algorithmic}
\end{algorithm}

\section{Case Study: Mixed Integer Economic MPC for Multi-Product System}\label{sec:mimpc-model}
\subsection{Model Description}
We consider a continuous production system in which $N_p$ products are manufactured on a fixed horizon $H$ in a single continuous stirred tank reactor (CSTR). We consider the case where the operation of the reactor is affected by changes in the inlet conditions and/or product demand. To account for such disturbances, a mixed integer economic model predictive control problem is solved to determine the optimal production sequence, production times, and transition profiles such that profit is maximized.

\subsubsection{Scheduling Model}
We define $\mathcal{I}_p=\{1, \dots , N_p\}$ as the set of products to be manufactured and the scheduling horizon is discretized into $N_{s}=N_{p}$ slots, with $\mathcal{I}_s=\{1, \dots , N_s\}$ being the set of slots. Each product is assigned into one slot exclusively, while each slot consists of two types of regimes: the production regime where a product is steadily manufactured, and the transition regime where the system state transits from the current operation steady state to that of another product. The first slot has three regimes: a transition regime that drives the intermediate state to the operating state of the product assigned in the first slot, a production regime for steady manufacturing, and another transition regime from the current operation state to that of the product assigned in the second slot. The last slot only considers one production regime. 

We define a binary variable $y_{ik}$ which is equal to 1 if product $i$ is assigned to the slot $k$ and zero otherwise. In addition, we define the binary variable $z_{ijk}$ which is equal to one if a transition occurs from the product $i$ to $j$ in the slot $k$ and zero otherwise.  Similarly, we define a binary variable $\hat{z_i}$ that is equal to 1 if a transition occurs from the intermediate state to the product $i$ in the first slot and zero otherwise. The transition logic constraints are presented below.
\begin{equation}\label{eq:logic}
    \begin{aligned}
        & \sum_{k\in \mathcal{I}_s} y_{ik} = 1 \quad \forall i\in \mathcal{I}_p \\
        & \sum_{i\in \mathcal{I}_p} y_{ik} = 1 \quad \forall k\in \mathcal{I}_s \\
        & \hat{z_i} = y_{i1} \quad \forall i \in \mathcal{I}_p \\
        & z_{ijk} \ge y_{ik} + y_{j,k+1}-1 \quad \forall  i\in \mathcal{I}_p, j\in \mathcal{I}_p, k\in \mathcal{I}_s \backslash \{N_s\} \\
        & z_{ijk} \le y_{ik} \quad \forall  i\in \mathcal{I}_p, j\in \mathcal{I}_p, k\in \mathcal{I}_s \backslash \{N_s\} \\
        & z_{ijk} \le y_{j,k+1} \quad \forall  i\in \mathcal{I}_p, j\in \mathcal{I}_p, k\in \mathcal{I}_s \backslash \{N_s\}
    \end{aligned}
\end{equation}
The first constraint enforces that each product is manufactured once and the second constraint guarantees that at each slot one product is manufactured. 

We define $\Theta_{ik}$ as the production time of product $i$ in slot $k$, $\theta_{k}^{t}$ is the transition time in slot k, $\theta_{ijk}$ as the transition time from $i$ to $j$ in slot $k$, and $\hat\theta_i$ as the transition time from the intermediate state to the operating state of product $i$. Moreover, $T^s_k$ and $T^e_k$ are the start and end times of the slot $k$, respectively. The timing constraints are the following.
\begin{equation}\label{eq:timing}
    \begin{aligned}
        & T^s_1 = 0 \\
        & T^e_{N_s} \le H \\
        & T^s_{k+1} = T^e_k \quad \forall k \in \mathcal{I}_s \backslash \{N_s\} \\
        & T^e_k = T^s_k + \sum_{i\in \mathcal{I}_p} \Theta_{ik} + \theta^t_k \quad \forall k\in \mathcal{I}_s \backslash \{N_s\} \\
        & T^e_{N_s} = T^s_{N_s} + \sum_{i\in \mathcal{I}_p} \Theta_{i{N_s}} \\
        & \Theta_{ik} \le y_{ik} H \quad \forall i\in \mathcal{I}_p,k\in \mathcal{I}_s \\
        & \theta^t_k = \sum_{i\in \mathcal{I}_p} \sum_{j\in \mathcal{I}_p} \theta_{ijk} z_{ijk} \quad \forall k \in \mathcal{I}_s \backslash \{1\} \\
        & \theta^t_1 = \sum_{i\in \mathcal{I}_p} \sum_{j\in \mathcal{I}_p} \theta_{ij1} z_{ij1} + \sum_{i\in \mathcal{I}_p} \hat{\theta}_i \hat{z}_i \\
        & \theta_{ijk} \ge \underline{\theta}_{ij} \quad \forall i\in \mathcal{I}_p , j \in \mathcal{I}_p, k \in \mathcal{I}_s \\
        & \hat{\theta}_i \ge \underline{\hat{\theta}}_i \quad \forall i \in \mathcal{I}_p
    \end{aligned}
\end{equation}
where $H$ is the scheduling horizon, $\underline{\theta}_{ij}$ is the minimum transition time from the product $i$ to $j$, and $\underline{\hat{\theta}}_i$ is the minimum transition time from the intermediate state to the steady state of the product $i$. Finally, we define $I_{ik}$ as the inventory of product $i$ in slot $k$, and $q_{ik}$ as the production amount of product $i$ in slot $k$. The inventory balance constraints are
\begin{equation}\label{eq:inv}
    \begin{aligned}
        & q_{ik} = r_i \Theta_{ik} \\
        & I_{ik} = I_{i, k-1} + q_{ik} - S_{ik} \quad \forall i\in \mathcal{I}_p, k \in \mathcal{I}_s \backslash \{1\}\\
        & I_{i1} = I_{i}^{0} + q_{i1} - S_{i1} \\
        & S_{iN_s} \geq d_i \quad \forall i \in \mathcal{I}_p
    \end{aligned}
\end{equation}
where $S_{ik}$ is the amount of product $i$ sold in slot k, $r_{i}$ is the production rate of product $i$, $I_{i}^{0}$ is the initial inventory of product $i$, and $d_{i}$ is the demand of product $i$, which is satisfied at the end of the scheduling horizon. 

\subsubsection{Dynamic Model}
The manufacturing system is an isothermal continuously stirred tank reactor where an irreversible third-order reaction $A \rightarrow 3B$ occurs, adopted from \cite{flores2006simultaneous}. The dynamic behavior is described by the following differential equation: 
\begin{align}
    \frac{dc(t)}{dt} = \frac{F(t)}{V} (c_f - c(t)) - kc(t)^3
\end{align}
where $c(t)$ is the concentration of the reactant and $F(t)$ is the inlet flowrate. The parameters are the composition of the feed stream $c_f=1 \ \text{mol}/\text{L}$, the reactor volume $V=5000\text{L}$, and the reaction constant $k=2\text{L}^2/\text{mol}^2 h$. The reactor can manufacture multiple products by adjusting the inlet flowrate. We define as $c^{ss}_i$ the steady state concentration and as $F^{ss}_i$ the steady state flowrate of product $i$.

We consider all possible transitions simultaneously and define $c_{ijk}(t)$, $F_{ijk}(t)$ as the value of the concentration and flowrate at time $t$ during a transition from product $i$ to $j$ in the slot $k$. The equations that describe the behavior of the reactor for a transition are
\begin{equation*}\label{eq:dae1}
    \begin{aligned}
        & \frac{dc_{ijk}(t)}{dt} = \frac{F_{ijk}(t)}{V} (c_f - c_{ijk}(t)) - k c_{ijk}(t)^3 \\
        & c_{ijk}(0) = c^{ss}_i \\ 
        & c_{ijk}(\theta_{ijk}) = c^{ss}_j \\ 
        & F_{ijk}(0) = F^{ss}_i \\ 
        & F_{ijk}(\theta_{ijk}) = F^{ss}_j \\
        & c^L \leq c_{ijk}(t) \leq c^U\\
        & F^L \leq F_{ijk}(t) \leq F^U,
    \end{aligned}
\end{equation*}
where $c^L$, $c^U$, $F^L$, $F^U$ are the lower and upper bounds of $c_{ijk}$ and $F_{ijk}$, respectively. To account for the fact that the transition time is an optimization variable, we define the scaled time $\tau = t/\theta_{ijk}^{t}$ and reformulate the above differential equations as follows.
\begin{equation}\label{eq:dae1}
    \begin{aligned}
        & \frac{dc_{ijk}(\tau)}{d\tau} = \left(\cfrac{F_{ijk}(\tau)}{V} (c_f - c_{ijk}(\tau)) - kc_{ijk}(\tau)^3 \right) \theta_{ijk} \\
        & c_{ijk}(0) = c^{ss}_i \\
        & c_{ijk}(1) = c^{ss}_j \\
        & F_{ijk}(0) = F^{ss}_i \\
        & F_{ijk}(1) = F^{ss}_j \\
        & c^L \le c_{ij}(\tau) \le c^U \\
        & F^L \le F_{ij}(\tau) \le F^U 
    \end{aligned}
\end{equation}

Similarly, for a given $c_0$, the dynamic transition from the intermediate state to the steady operating state of product $i$ in the first slot can be described as the following constraints: 
\begin{equation}\label{eq:dae2}
    \begin{aligned} 
        & \cfrac{d\hat c_{i}(\tau)}{d\tau} = \left(\cfrac{\hat F_{i}(\tau)}{V} (c_f - \hat c_{i}(\tau)) - \hat kc_{i}(\tau)^3 \right)  \hat\theta_{i} \\
        & \hat c_{i}(0) = c_0 \\
        & \hat c_{i}(1) = c_i \\
        & \hat F_{i}(1) = F^{ss}_i \\
        & \hat c^L \le \hat c_{i}(\tau) \le \hat c^U \\
        & \hat F^L \le \hat F_{i}(\tau) \le \hat F^U \\
    \end{aligned}
\end{equation}
where
$\hat c^L$, $\hat c^U$, $\hat F^L$, $\hat F^U$ are the lower and upper bounds of $\hat c_{i}$ and $\hat F_{i}$, respectively.

\subsubsection{Mixed-Integer Economic MPC Problem}

The goal of the integrated problem is to find the optimal production schedule and dynamic operation such that profit is maximized. The objective has two components. The first, $\Phi_{1}$, considers sales minus inventory, operating, and fixed transition costs. This term is equal to
\begin{equation}\label{eq:obj-schedule}
    \begin{aligned}
        \Phi_{1} = \sum_{i\in \mathcal{I}_p} \sum_{k \in \mathcal{I}_s} (Pr_{i}S_{ik} - C^{op}_{i}q_{ik}) - C^{inv}I_{ik} - \sum_{i \in \mathcal{I}_p} \sum_{j \in \mathcal{I}_p}\sum_{k \in \mathcal{I}_s} C^{tr}_{ij} z_{ijk}
    \end{aligned}
\end{equation}
where $Pr_{i}$ and $C_{i}^{op}$ are the sale price and operating cost of product $i$, respectively, $C^{inv}$ is the inventory cost, and $C^{tr}_{ij}$ is the fixed transition cost for the dynamic transition from product $i$ to $j$ in slot $k$.

The second term in the objective is related to the variable transition cost between the products, i.e. it depends on the transition time. For a transition from product $i$ to $j$ in slot $k$ with transition time $\theta_{ijk}$ this cost is equal to
\begin{equation*}
    \Phi_{ijk} = \alpha_u \theta_{ijk} \int^{1}_{0} (F_{ijk}(\tau)-F^{ss}_j)^2 d\tau
\end{equation*}
and the cost from the intermediate state $c_{0}$ to the product $i$ is
\begin{equation}
    \hat\Phi_{i} = \alpha_u \hat{\theta}_{i} \int^{1}_{0} (\hat{F}_{i}(\tau)-F^{ss}_i)^2 d\tau.
\end{equation}

In general, the optimization problem is presented below. 
\begin{equation}
    \begin{aligned}
        \mathcal{P}(p):= \text{minimize} & \quad \Phi_{1} - \sum_{i \in \mathcal{I}_p} \sum_{j \in \mathcal{I}_p}\sum_{k \in \mathcal{I}_s}\Phi_{ijk} z_{ijk} - \sum_{i \in \mathcal{I}_p} \hat \Phi_{i} \hat{z}_{i} \\
        \text{subject to} & \quad \text{Eqs. (\ref{eq:logic}), (\ref{eq:timing}), (\ref{eq:inv}), (\ref{eq:obj-schedule})}.\\
        & \quad \text{Eqs.~(\ref{eq:dae1})} \quad \forall i \in \mathcal{I}_p, j\in \mathcal{I}_p, k\in \mathcal{I}_s \\
        & \quad \text{Eqs.~(\ref{eq:dae2})} \quad \forall i \in \mathcal{I}_p
    \end{aligned}
\end{equation}

This is a Mixed Integer Dynamic Optimization (MIDO) problem which is transformed into a Mixed Integer Nonlinear Programming problem by discretizing the differential equations and integrals. 

\subsubsection{Decomposition-Based Solution Approach}
If the scheduling-related variables are fixed ($\Theta_{ik}$, $\theta_{k}^{t}$, $\theta_{ijk}$, $\hat\theta_i$, $T^s_k$, $T^e_k$,$I_{ik}$, $q_{ik}$, $S_{ik}$, $y_{ik}$, $z_{ijk}$, $\hat{z_i}$), the mixed integer MPC problem reduces to multiple independent subproblems. Each subproblem considers either a transition between two products in a slot or a transition from the intermediate state to the product manufactured in the first slot. We apply the GBD-based algorithm to solve this problem. The subproblem determining the transition cost from the product $i$ to $j$ in the slot $k$ is defined as:
\begin{equation}
    \begin{aligned}
        {S}_{ijk}(\theta_{ijk}) := \text{minimize} \quad & \Phi_{ijk}\\
        \text{subject to} \quad & \text{Eqs. (\ref{eq:dae1})}\\
        & \bar{\theta}_{ijk} = \theta_{ijk}: \lambda_{ijk}
    \end{aligned}
\end{equation}
where $\lambda_{ijk}$ is the Lagrange multiplier associated with the equality constraint $\bar{\theta}_{ijk} = \theta_{ijk}$. Similarly, the subproblem determining the transition cost from intermediate state to operating state of product $i$ is the following:
\begin{equation}
    \begin{aligned}
        \hat {{S}}_{i}(\hat\theta_{i}) :=  \text{minimize} \quad & \hat\Phi_{i}\\
        \text{subject to} \quad & \text{Eqs. (\ref{eq:dae2})}\\
        & \bar{\hat\theta}_{i} = \hat\theta_{i}: \hat\lambda_i
    \end{aligned}
\end{equation}
The master problem determines the production sequence, production and transition time while using an approximation of the transition costs via the Benders cuts.  We introduce auxiliary variables $\eta_{ijk}$ and $\hat\eta_i$ to approximate the cost of each dynamic transition. The master problem is formulated as follows. 
\begin{equation}
    \begin{aligned}
        \text{minimize} \quad & \Phi - \sum_{i \in \mathcal{I}_p} \sum_{j \in \mathcal{I}_p}\sum_{k \in \mathcal{I}_s}\eta_{ijk}z_{ijk} - \sum_{i \in \mathcal{I}_p} \hat \eta_i \hat z_{i} \\
        \text{subject to}\quad & \text{Eqs. (\ref{eq:logic}), (\ref{eq:timing}), (\ref{eq:inv}), (\ref{eq:obj-schedule})} \\
        & \eta_{ijk} \ge {S}_{ijk}(\theta^{(l)}_{ijk})-\lambda^{(l)}_{ijk} (\theta_{ijk} - \theta^{(l)}_{ijk}) \quad \forall i , j\in \mathcal{I}_p, l \in \mathcal{L}\\
        & \hat\eta_{i} \ge \hat{{S}}_{i}(\hat\theta^{(l)}_{i})- \hat \lambda^{(l)}_{i} (\hat \theta_{i} - \hat \theta^{(l)}_{i}) \quad \forall i \in \mathcal{I}_p, l \in \mathcal{L}
    \end{aligned}
\end{equation}
where $l$ is the iteration index. All bilinear terms ($\eta_{ijk}z_{ijk}$, $\hat \eta_i \hat z_{i}$, $\theta_{ijk}z_{ijk}$, $\hat \theta_i \hat z_{i}$) in the master problem are linearized using the McCormick envelopes, leading to an MILP problem \cite{mitrai2022multicut}. 

\subsection{Experimental Setup}\label{sec:setup}


\subsubsection{Optimization Model Configurations}
To fully validate the proposed method, we train and test the agent on problems with a varying number of products. Each problem instance is characterized by randomly generated parameters $p=\{c_0, \{d_i\}_{i\in \mathcal{I}_p}\}$, where $c_0\sim\mathcal{U}(0.8,1.2)$ is the inlet flow concentration and $d_i$ is the demand of product $i$. For each problem instance, the product demand $d_i$ is generated randomly following a uniform distribution, $d_i\sim \mathcal{U}(0.9d_i^\text{nom}, 1.1d_i^\text{nom})$ with $d_i^\text{nom}$ being the nominal demand. Other relevant parameters for each case are provided in the Supplementary Material. The randomly generated parameter values should guarantee the feasibility of the master problem as discussed in \cite{mitrai_computationally_2024}. Other model parameters include the scheduling horizon $H=168 \text{h}$, inventory cost coefficient $C^{inv}=0.5 \$/\text{mol}$, control cost coefficient $\alpha_u=0.5$ for each transition. The MILP master problem is formulated in Pyomo \cite{pyomo} and solved with Gurobi (version 12.0.1) \cite{gurobi}. The dynamic optimization subproblems are modeled in Pyomo.DAE \cite{pyomo_dae}, discretized via the backward Euler method with 120 finite elements, and solved using IPOPT (version 3.14.17) \cite{ipopt}. The GBD algorithm termination threshold is set as $\varepsilon_{tol}=1\times10^{-3}$. 

\subsubsection{Reinforcement Learning Implementation}

The maximum number of iterations is $T_{\max}=50$ for each episode. For the parameter values in the reward function, we set the weight coefficient $\alpha=2$ and the reference runtime $t_{ref}$ to 0.2, 0.45, and 0.75 s for the case of five, six, and seven products, respectively. The upper and lower bounds of the master gap tolerance are set as $\overline{\epsilon}_{tol-MP}=0.3$ and $\underline{\epsilon}_{tol-MP}=1\times10^{-3}$, respectively. 

The PPO algorithm is implemented using Stable Baselines3 \cite{stable-baselines3}, a PyTorch-based RL library. As mentioned in Section~\ref{subsec:igbd-rl}, we consider a normalized score $a_l\in[-1,1]$ as the output of the policy network and map it to $\epsilon_{tol-MP}^{(l)}$ using Eq.~(\ref{eq:action-map}). The continuous domain is uniformly discretized into $K=11$ points, leading to a discrete action space $\mathcal{A} =\bigl\{-1,-0.8,\dots,1\bigr\}$. For the PPO algorithm, the \(K\)-dimensional multi-discrete action space is factorized into \(K\) independent categorical distributions. At each step \(l\), the policy network produces a logit for each category based on the state vector $s_l$, passes all logits through a softmax to define a categorical distribution and samples from the distribution as the output $a_l$. During training, the action is randomly sampled to promote exploration, while during testing, the action with maximal logit is selected deterministically. Both the actor and the critic are feedforward neural networks. The actor network has 3 layers with 64 nodes per layer, while the critic network has 2 layers with 64 nodes each. For the hyperparameter values of PPO, the discount factor is $0.99$, the clip range is $0.15$, while the remaining parameters are set as default. During training, the learning rate is $5\times10^{-4}$, the number of steps to run per update is 512, and the batch size is 128. We train a separate agent for each case, each for 20000 training time steps. All experiments were performed on an Apple MacBook Air equipped with an M2 8-core chip and 8 GB of RAM.

\subsubsection{Benchmarks and Evaluation Metrics}
To assess the performance of iGBD with the RL-based tolerance selection policy (RL-iGBD), we consider the following three baseline policies: 

\begin{itemize}
    \item \textbf{Rand-iGBD} ($a_l\sim\mathcal{U}(\mathcal{A})$): iGBD with uniformly random outputs from the adaptive action space, mimicking an untrained RL policy. We choose this baseline to filter out the influences from the adaptive action space, demonstrating the effectiveness of training.
    
    \item \textbf{Exp-iGBD} ($\epsilon_{tol-MP}^{(l)}=\max (\overline{\epsilon}_{tol-MP}\cdot \alpha_{exp}^{l-1}, \underline{\epsilon}_{tol-MP})$): iGBD with an open-loop scheduled exponential decay sequence. The exponential factor is set as  $\alpha_{exp}=0.8$.
    
    \item \textbf{Classic GBD} ($\epsilon_{tol-MP}^{(l)}=\underline{\epsilon}_{tol-MP}$): Optimal solution of the master problem at every iteration $l$. 
\end{itemize}

In the computational experiments, all runtimes reported correspond to CPU time, as returned directly by the solvers. Our case study involves one master problem and a number $N_p$ of subproblems in each iteration. All subproblems can be solved independently in parallel. As metrics for evaluating performance, we define the cumulative CPU solution time to solve the master problem $t_{MP}$:
\begin{align}
    t_{MP} = \sum^T_{l=1} t^{(l)}_{MP}
\end{align}
and the cumulative total CPU solution time $t_{total}$: 
\begin{align}
    t_{total} = \sum^T_{l=1}\left(
    t^{(l)}_{MP} + \max \left\{\hat{t}_{SP}^{(l)}, t_{SP,1}^{(l)},\dots,t_{SP,N_p-1}^{(l)}\right\}
    \right)
\end{align}
where $T$ is the length of the episode. In iteration $l$, $\hat{t}_{SP}^{(l)}$ is the CPU time to solve the transition subproblem from the intermediate state in slot 1, and $t_{SP, k}^{(l)}$ is the CPU time to solve the transition sub-problem between products in slot $k$. 

\section{Computational Results}\label{sec:results}

\subsection{Performance Comparison} \label{subsec:episode}

To illustrate the impact of the RL-based and baseline tolerance selection policies on algorithm convergence, we analyze their performances in solving a representative problem instance. We consider the case of five products with parameters $c_0 = 1.0$, $d_A = 3000$, $d_B=5000$, $d_C=6000$, $d_D=10000$, $d_E=14000$. Each parameter value is selected as the midpoint of its sample range. All methods converge to the same optimal solution with objective value equal to $1.089\times 10^7$. 

\begin{figure}[h!]
    \centering
    \includegraphics[width=\columnwidth,height=0.85\textheight,keepaspectratio]{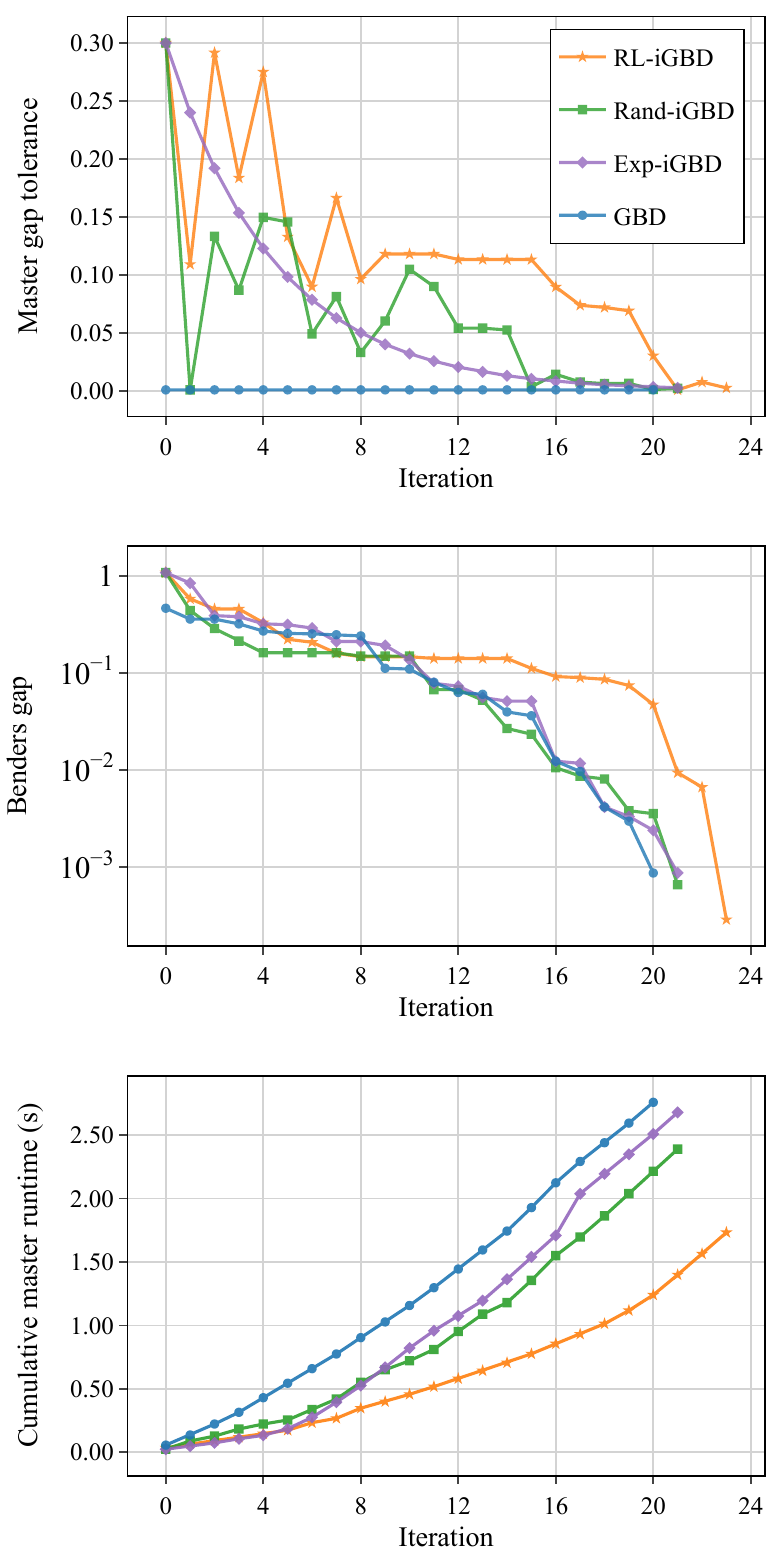}
    \caption{Per-iteration performance of different methods: master gap tolerance (top figure), Benders gap (medium figure) and cumulative master runtime (bottom figure).}
    \label{fig:single-convergence}
\end{figure}

Fig~\ref{fig:single-convergence} shows the convergence profiles for each method. The proposed RL-iGBD requires 23 iterations to reach convergence, whereas Rand-iGBD and Exp-iGBD each require 21 iterations, and the classic GBD requires 20 iterations. 
The top figure shows the master gap tolerances $\epsilon_{tol-MP}^{(l)}$ selected at each iteration. The classic GBD solves the master problem optimally at every iteration, with its tolerance always being $\underline{\epsilon}_{tol-MP}$.
All inexact methods continuously adjust the tolerance during the solve and eventually drive it toward $\underline{\epsilon}_{tol-MP}$. The cumulative master problem runtime of RL-iGBD, Rand-iGBD, Exp-iGBD and classic GBD are 1.73s, 2.38s, 2.68s, and 2.75s, respectively, as shown in the bottom figure. All three inexact strategies outperform the classic GBD implementation in terms of cumulative master problem CPU time, while RL-iGBD yields the largest improvement of 37.1\% over the classic GBD. 

It is interesting to observe the behavior of the RL-iGBD method compared with the baselines. During the early iterations without too many Benders cuts added, as the master problem has a relatively small scale and is easy to optimize, RL-iGBD selectively tightens the tolerance to reduce the Benders gap, and it is hard to distinguish it from other methods in the middle panel. 
At iteration 10, RL-iGBD starts to maintain a high tolerance plateau, which slows the reduction of the Benders gap and ultimately leads to more iterations to converge. However, staying at a high gap tolerance reduces the runtime per iteration much more compared with other methods. Eventually, although RL-iGBD takes 4 extra iterations to converge, it achieves a reduction of cumulative runtime, which aligns with our training goal. These findings demonstrate that the RL policy effectively balances convergence speed and computational efficiency. 

\subsection{Statistical Performance} 

\begin{table*}
  \centering
  \caption{Performance metrics of RL-iGBD and baseline methods across varying product numbers.}
  \label{tab:solution-time-stats}
  \makebox[\textwidth][c]{%
    \begin{tabular}{@{}c c c c c c c@{}}
      \toprule
      \# Product 
        & Method 
        & \makecell{Median\\\# iterations}
        & \makecell{Avg. total\\CPU time (s)}
        & \makecell{Std. total\\CPU time (s)} 
        & \makecell{Avg. master\\CPU time (s)}
        & \makecell{Std. master\\CPU time (s)} \\
      \midrule
      \multirow{4}{*}{$N_p=5$}
        & \textbf{RL-iGBD} & \textbf{24} & \textbf{3.49} & $\pm${0.160} & \textbf{1.82} & {$\pm$0.097} \\
            & Rand-iGBD    &   22   & 3.97 & $\pm$0.282 &  2.35  &  $\pm$0.230  \\
            & Exp-iGBD     &   22   & 4.20 & $\pm$0.268 &  2.64  &  $\pm$0.190  \\
            & GBD          &   21   & 4.56 & $\pm$0.305 &  2.94  &  $\pm$0.211 \\
      \midrule
      \multirow{4}{*}{$N_p=6$}
      & \textbf{RL-iGBD}     &   \textbf{25} & \textbf{5.94} & $\pm${0.692}  &   \textbf{4.13}  &  {$\pm$0.663} \\
            & Rand-iGBD   &   22   & 7.29 & $\pm$0.430 &  5.65  &  $\pm$0.405 \\
            & Exp-iGBD    &   21   & 7.45 & $\pm$0.299 &  5.84  &  $\pm$0.257 \\
            & GBD         &   21   & 8.13 & $\pm$0.239 &  6.54  &  $\pm$0.200 \\
    \midrule
    \multirow{4}{*}{$N_p=7$}
      & \textbf{RL-iGBD}     &   \textbf{25}  & \textbf{10.89} & $\pm${1.554} &   \textbf{8.94}  &  {$\pm$1.479}\\
            & Rand-iGBD   &   20  & 13.32 & $\pm$0.986 &   11.62  &  $\pm$0.915\\
            & Exp-iGBD    &   20  & 13.81 & $\pm$1.164 &   12.14  &  $\pm$1.108\\
            & GBD         &   19  & 14.09 & $\pm$1.106 &   12.56  &  $\pm$1.020 \\
      \bottomrule
    \end{tabular}%
  }
\end{table*}

\begin{figure*}
    \centering
    \includegraphics[width=1.0\textwidth]{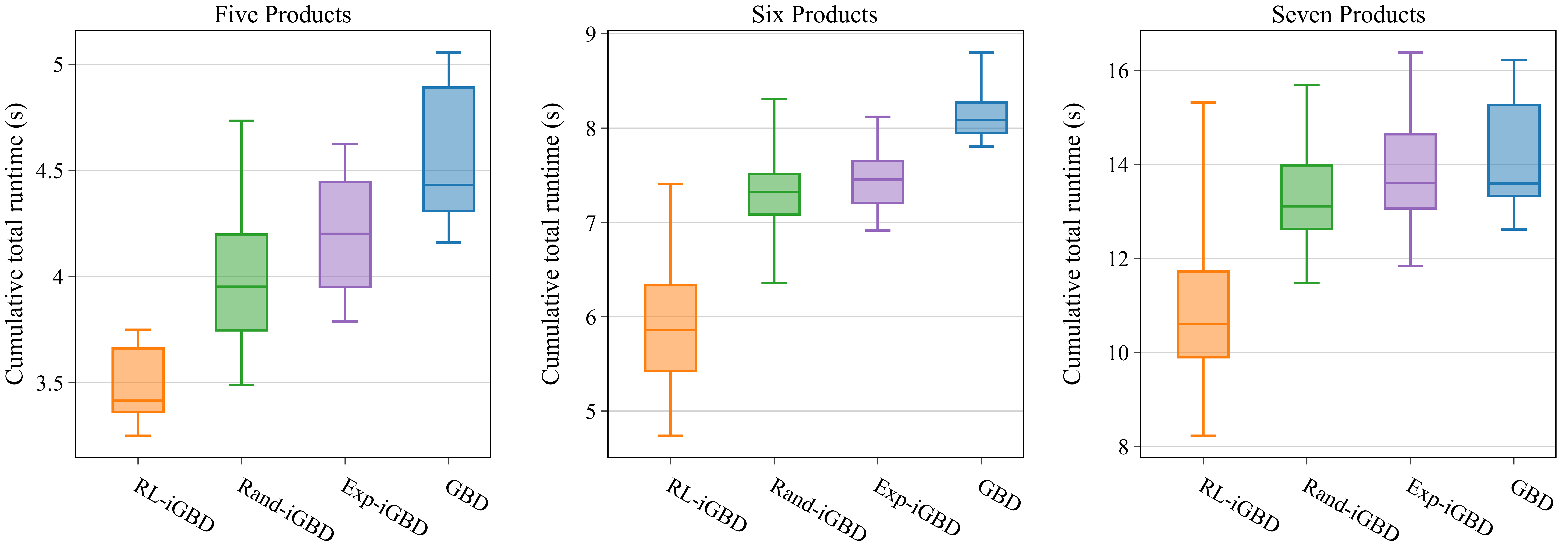}
    \caption{Box‐plot of cumulative total runtimes for different methods across five- (left figure), six- (middle figure), and seven-product (right figure) cases.} 
    \label{fig:50-boxplot-total}
\end{figure*}

\begin{figure*}
    \centering
    \includegraphics[width=1.0\textwidth]{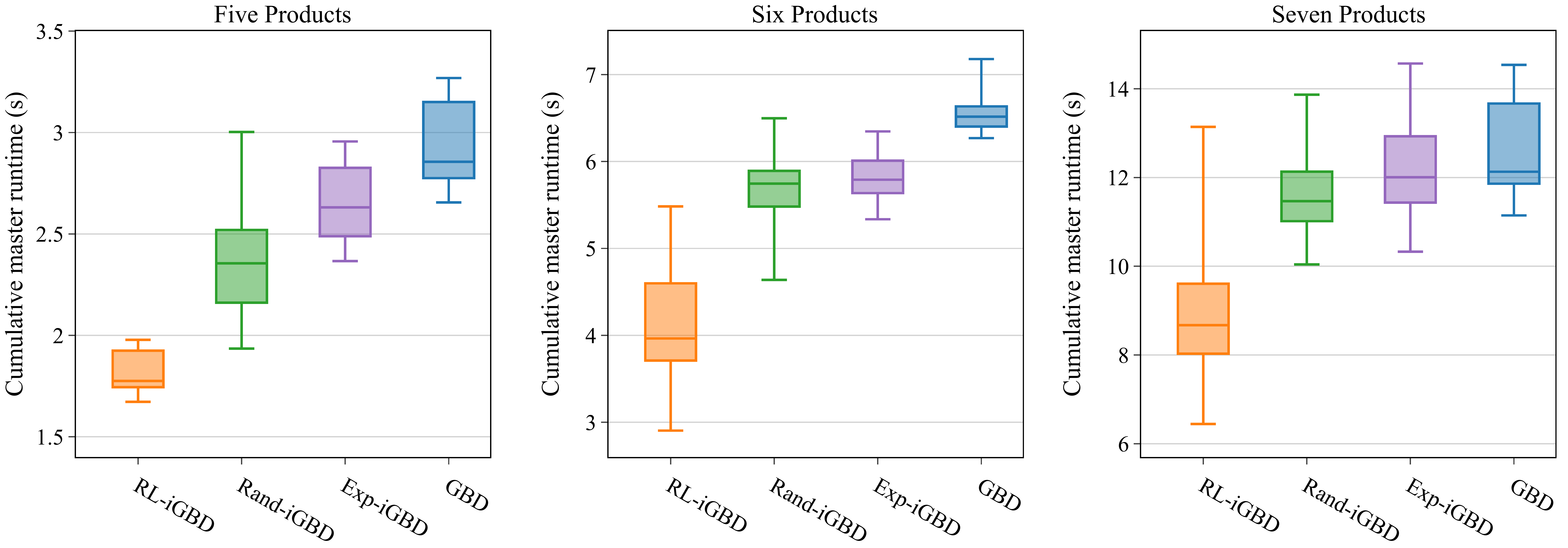}
    \caption{Box‐plot of cumulative master problem runtimes for different methods across five- (left figure), six- (middle figure), and seven-product (right figure) cases.} 
    \label{fig:50-boxplot}
\end{figure*}

To comprehensively assess the performance of the RL-iGBD method, we generate 50 random instances for each case. All instances are solved before the maximum iteration limit $T_{\max}$. 

For each method, Table~\ref{tab:solution-time-stats} summarizes the median number of iterations, the total solution runtime, and the cumulative solution runtime of the master problem. We observe that although all inexact approaches require more iterations than the standard GBD, the total solution time is lower. 
However, for the random and exponential optimality gap tolerance selection policy, the benefits diminish since, for seven products, the average solution time is similar to that of standard GBD (see Fig~\ref{fig:50-boxplot-total} and Fig~\ref{fig:50-boxplot}). However, for the proposed RL-iGBD algorithm, although the solution time increases as the number of products increases, the average solution time is lower than the other inexact approaches and standard GBD. Specifically, compared to standard GBD, the proposed RL approach achieves a reduction in total solution time of 23.4\%, 26.9\% and 22.7\% on average, for the cases of five-, six-, and seven-product cases, respectively. Moreover, the RL approach is 17\% and 20\% faster than the Rand-iGBD and Exp-iGBD approaches for seven products. Finally, we observe that as the number of products increases, the variance in the solution time for all methods increases. This can be attributed to the greater complexity of the learning task in higher-dimensional problems as well as the greater variance in the complexity of the optimization problem itself. 

\section{Conclusion}\label{sec:concl}
In this paper, we have introduced an iGBD framework to alleviate the computational burden of solving challenging master problems. Instead of solving the master problem optimally at every iteration, iGBD accepts any feasible solution within a predefined optimality gap tolerance. Moreover, we formulate tolerance selection as a sequential decision-making task, leveraging reinforcement learning to train a parameterized policy that dynamically balances runtime reduction and optimality gap improvement. Computational experiments on an economic mixed integer MPC problem demonstrate the computational efficiency of the proposed RL-iGBD method compared to classic GBD and inexact baselines tuned heuristically. 

Future work will evaluate the proposed RL-iGBD method on a broader class of real-world, large-scale problems to validate its robustness under various uncertainty regimes. The core concept of inexact master-problem tolerances could also be generalized to other decomposition frameworks (e.g., column generation or Lagrangian decomposition). From the RL perspective, leveraging sequence-based network architectures (e.g., recursive neural networks or transformers) to exploit complete solution trajectories may enhance sample efficiency and policy stability. Finally, exploring meta-learning for rapid policy transfer across different problems would further strengthen the adaptability of the framework.

\section*{Acknowledgements}

The partial financial support of NSF CBET (award number 2313289) is gratefully acknowledged. IM would like to acknowledge financial support from the McKetta Department of Chemical Engineering.

\section*{Data Availability}
Codes for computational experiments are published at \url{https://github.com/BambooSticker/RL-iGBD}.

\bibliographystyle{elsarticle-num}
\bibliography{references}

\end{document}